\newtheorem{theorem}{Theorem}
\newtheorem{proposition}[theorem]{Proposition}
\newtheorem{lemma}[theorem]{Lemma}
\newtheorem{assumption}[theorem]{Assumption}
\title{Small ball probabilities and large deviations for grey Brownian motion}
\author{Stefan Gerhold\\
TU Wien \\
%1040 Vienna, Austria\\
\tt{sgerhold@fam.tuwien.ac.at}
}
\date{\today}
\numberwithin{equation}{section}
\numberwithin{theorem}{section}
\begin{document}

\maketitle

\begin{abstract}
  We show that the uniform norm of generalized grey Brownian motion over the unit
  interval has an analytic density, excluding the special case of fractional Brownian motion.
  Our main result is an asymptotic expansion for the small ball probability
  of generalized grey Brownian motion, which extends
  to other norms on path space. The decay rate is not exponential 
  but polynomial, of degree two. For the uniform norm and the H\"older norm, we also prove a large deviations
  estimate.
\end{abstract}

MSC2020: 60G22, %Fractional processes, including fractional Brownian motion
      60F99 %None of the above, but in this section (limit thms)

Keywords: grey Brownian motion, fractional Brownian motion, small ball probabilities, small deviations, large deviations, Wright M-function

\section{Introduction}

Generalized grey Brownian motion (ggBm) is a two-parameter stochastic process $B_{\alpha,\beta}$,
which is in general not Gaussian.
Introduced in~\cite{MuMa09,MuPa08}, ggBm has been considered in the physics literature to model
anomalous diffusions with non-Gaussian marginals, including both slow (variance grows
slower than linearly) and fast diffusive behavior.
The process $B_{\alpha,\beta}$ has stationary increments and is self-similar with parameter $H=\alpha/2$ \cite[Proposition~3.2]{MuMa09}.
 The marginal density of ggBm satisfies a fractional partial integro-differential equation~\cite{MuMa09}.
Special cases of ggBm include
fractional Brownian motion (fBm; $\beta= 1$), grey Brownian motion (\cite{Sc90}; $\alpha = \beta$),
 and Brownian motion ($\alpha = \beta = 1$). Our focus is mainly on the case $\beta<1$.
 In~\cite{MuMa09}, a generalized grey noise space is defined, motivated by white noise
 space, but with the Gaussian characteristic function replaced by the Mittag-Leffler function.
 The ggBm is then defined by evaluating generalized grey noise at the test function $1_{[0,t)}$.
 We do not go into details, because for our purposes, the representation
\begin{equation}\label{eq:repr}
  B_{\alpha,\beta}(t) = \sqrt{L_\beta} B_{\alpha/2}(t),\quad 0<\alpha<2,\ 0<\beta< 1,
\end{equation}
which was proved in~\cite{MuPa08}, is more convenient. Here,
$B_{\alpha/2}$ is a fBm with Hurst parameter $H=\alpha/2$,
and~$L_\beta$ is
an independent positive random variable whose density is the $M$-Wright function (see below).
The representation~\eqref{eq:repr} makes sense also in the limiting case $\beta=1$,
but we will not require this.

The problem of small ball probabilities, also called small deviations, consists
of estimating
\begin{equation}\label{eq:sb}
  \mathbb{P}\Big[\sup_{0\leq t\leq 1}|B_{\alpha,\beta}(t)|\leq \varepsilon\Big],\quad \varepsilon \downarrow 0,
\end{equation}
asymptotically.
More generally, we can consider
\[
  \mathbb{P}\big[\| B_{\alpha,\beta} \|\leq \varepsilon\big],\quad \varepsilon \downarrow 0,
\]
where $\|\cdot\|$ is a norm on $C_0^{\gamma}[0,1]$, the space of $\gamma$-H\"older continuous
functions, with $0<\gamma<H=\alpha/2$.
%Motivation: nothing concrete; recently several papers have appeared on grey BM, and small ball
%prob is a natural thing to study for a given process.
For ggBm with $\beta<1$, our main result (Theorem~\ref{thm:main}) shows 
that~\eqref{eq:sb} is of order $\varepsilon^2$, and that this also
holds for some other norms.
For Gaussian processes, such as fBm ($\beta=1$), the small ball problem has been
studied extensively~\cite{LiSh01}, and exponential decay is typical.
 But there are also many works
studying small ball probabilities for non-Gaussian processes; see, e.g., \cite{AuLiLi09,AuSi07} and the references therein.
We refer to~\cite{Ko16,Na09} for other
examples of processes with the small ball rate~$\varepsilon^2$ of ggBm.

In Section~\ref{se:small}, we will show that the known exponential small ball estimates for fBm
can be used to deduce our quadratic small ball estimate for ggBm. As a byproduct, we show
that the uniform norm (sup norm) of ggBm has a smooth, even analytic, pdf. In Section~\ref{se:large},
we provide a large deviations estimate. The decay rate is exponential, but slower than
Gaussian, depending on the parameter~$\beta$.

Notation: When we write $B_{\alpha,\beta}$, we always
mean the process on the time interval $[0,1]$, i.e.\ $B_{\alpha,\beta}=(B_{\alpha,\beta}(t))_{0\leq t\leq 1}$.
We write $F_H$ for the cdf of $\| B_H\|$, assuming
that the choice of the norm $\| \cdot \|$ is clear from the context. 
As usual, $\mathbb{R}^+=(0,\infty)$ denotes the positive reals.
The letter $C$ denotes various positive constants.

\section{Analyticity of the cdf and small ball probability}\label{se:small}

The $M$-Wright function, which is the pdf of $L_\beta$ in~\eqref{eq:repr}, is defined by
\begin{equation}\label{eq:def M}
   M_\beta(x)= \sum_{n=0}^\infty \frac{(-x)^n}{n! \Gamma(1-\beta-\beta n)},
   \quad x\geq0,\ 0<\beta<1.
\end{equation}
It is not obvious that $M_\beta$ is a pdf; for this, and more information
on $M_\beta$ and its generalizations, we refer to~\cite{MaMuPa10}. For later use,
we note that it follows from Euler's reflection formula that
\begin{equation}\label{eq:refl}
  \frac{1}{\Gamma(1-\beta-\beta n)} = \frac{\sin\big(\pi(\beta+\beta n)\big)}{\pi}
    \Gamma(\beta+\beta n),
\end{equation}
(cf.~\cite[p.~41]{Wr40} and~\cite[(3.8)]{MaMuPa10}), 
which shows, by Stirling's formula for the gamma function, that the series in~\eqref{eq:def M}
defines an entire function.
For this, the crude version
\begin{equation}\label{eq:stir}
  \Gamma(x) = x^{x+o(x)},\quad x\uparrow \infty,
\end{equation}
of Stirling's formula suffices.
 We will also need the asymptotic behavior of~$M_\beta$
  at infinity~\cite[(4.5)]{MaMuPa10},
  \begin{equation}\label{eq:M as}
     M_\beta(x) = \exp\Big( {-\frac{1-\beta}{\beta}}(\beta x)^{\frac{1}{1-\beta}}
     + O(\log x) \Big),\quad x\uparrow \infty.
  \end{equation}

Our main assumption is that fBm satisfies an exponential small ball estimate
w.r.t.\ to the chosen norm $\|\cdot\|$.

\begin{assumption}\label{ass}
  For $0<H<1$, there are $\theta,C_1,C_2>0$ such that
  \begin{equation*}
  -C_1 \varepsilon^{-\theta} \leq  \log\mathbb{P}\big[\| B_H \|\leq \varepsilon\big]\leq
   -C_2 \varepsilon^{-\theta}, \quad \varepsilon \in (0,1].
\end{equation*}
\end{assumption}
For the uniform norm, it is known that this holds with $\theta=1/H$,
\begin{equation}\label{eq:fbm}
  -C_1 \varepsilon^{-1/H} \leq  \log\mathbb{P}\Big[\sup_{0\leq t\leq 1}|B_H(t)|\leq \varepsilon\Big]\leq
   -C_2 \varepsilon^{-1/H}.
\end{equation}
Assumption~\ref{ass} also holds for the $\gamma$-H\"older norm, where
$0<\gamma<H$, and for the $L^2$-norm.
See~\cite{Br03,LiSh01,Li12} for the corresponding values of~$\theta$, and
for much more information on small ball probabilities
for fBm and other Gaussian processes.

The examples we just mentioned are norms in the classical sense, and so we stick to this
terminology in our statements. From our proofs, it is clear that it would suffice throughout
to assume that $\|\cdot\|$ is a measurable non-negative homogeneous functional.
\begin{proposition}\label{prop:an}
  Let $0<\alpha<2$, $0<\beta<1$. If the norm $\| \cdot \|$ satisfies Assumption~\ref{ass},
  then the cdf of $\| B_{\alpha,\beta} \|$ is an analytic function on~$\mathbb{R}^+$. In particular, this holds for the cdf of $\|B_{\alpha,\beta}\|_\infty=\sup_{0\leq t\leq 1}|B_{\alpha,\beta}(t)|$.
\end{proposition}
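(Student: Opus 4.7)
The plan is to show that the density of $\|B_{\alpha,\beta}\|$ is real-analytic on $\mathbb{R}^+$, so that the cdf is analytic as its antiderivative. The key idea is to pass to the logarithmic scale, where the product representation~\eqref{eq:repr} becomes an independent sum and the characteristic function of $\log\|B_{\alpha,\beta}\|$ factors. I will then extract exponential decay of this characteristic function from the explicit Mellin transform of~$L_\beta$, and Fourier inversion will produce an analytic density.

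Write $W:=\sqrt{L_\beta}$ and $Z:=\|B_H\|$, both almost surely positive. Independence gives
\[
  \varphi(t):=\mathbb{E}\bigl[(WZ)^{it}\bigr]=\mathbb{E}\bigl[L_\beta^{it/2}\bigr]\,\mathbb{E}\bigl[Z^{it}\bigr].
\]
It is well known that $\mathbb{E}[L_\beta^n]=n!/\Gamma(1+\beta n)$ (this follows, e.g., from the Laplace transform $\mathbb{E}[e^{-sL_\beta}]=E_\beta(-s)$), and analytic continuation yields $\mathbb{E}[L_\beta^s]=\Gamma(1+s)/\Gamma(1+\beta s)$ for $\mathrm{Re}\,s>-1$. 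Stirling's formula along the imaginary axis, $|\Gamma(1+iu)|\sim\sqrt{2\pi|u|}\,e^{-\pi|u|/2}$, then yields
\[
  \bigl|\mathbb{E}[L_\beta^{it/2}]\bigr|\sim\beta^{-1/2}\exp\!\Bigl(-\tfrac{\pi(1-\beta)}{4}|t|\Bigr),\qquad|t|\to\infty.
\]
Since $|\mathbb{E}[Z^{it}]|\leq 1$ trivially, this gives $|\varphi(t)|\leq Ce^{-\lambda|t|}$ with $\lambda:=\pi(1-\beta)/4>0$; the strict positivity of $\lambda$ is precisely where the standing hypothesis $\beta<1$ enters.

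With $\varphi\in L^1(\mathbb{R})$ enjoying exponential decay, the remainder is a standard Paley--Wiener argument. Fourier inversion gives $\log(WZ)$ the density $p(x)=(2\pi)^{-1}\int_{\mathbb{R}}e^{-itx}\varphi(t)\,dt$, and for complex $x$ in the horizontal strip $\{|\mathrm{Im}\,x|<\lambda\}$ the integrand remains absolutely integrable, so $p$ extends holomorphically to that strip. Pushing forward by the exponential map, $\|B_{\alpha,\beta}\|$ has density $q(\varepsilon)=\varepsilon^{-1}p(\log\varepsilon)$ on $\mathbb{R}^+$, which is real-analytic since $\log$ maps $\mathbb{R}^+$ analytically into the strip of holomorphy of $p$. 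The cdf, as an antiderivative of $q$, is then analytic on $\mathbb{R}^+$.

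The essential content of the proof is the exponential decay of $|\mathbb{E}[L_\beta^{it/2}]|$; everything else is routine. I expect the main obstacle to be purely bookkeeping: Assumption~\ref{ass} plays no substantive role here beyond guaranteeing $\mathbb{P}[Z>0]=1$, so the same argument would in fact prove analyticity of the cdf of $\sqrt{L_\beta}\,Z$ for any almost-surely-positive rv $Z$ independent of $L_\beta$. The one technical point that needs some care in the writeup is justifying the Mellin continuation of $\mathbb{E}[L_\beta^s]$ from nonnegative integers to the imaginary axis, which is standard given the super-exponential tail~\eqref{eq:M as}.
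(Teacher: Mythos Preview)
Your proof is correct and takes a genuinely different route from the paper. The paper argues directly: starting from~\eqref{eq:repr} it writes the cdf as $2\varepsilon^2\int_0^\infty F_H(y)\,M_\beta(\varepsilon^2/y^2)\,y^{-3}\,dy$, observes that $M_\beta$ is entire (so the integrand is entire in~$\varepsilon$) and bounded on $\mathbb{R}^+$, and then differentiates under the integral sign; Assumption~\ref{ass} enters to make $F_H(y)y^{-3}$ integrable near $y=0$. Your argument instead passes to $\log\|B_{\alpha,\beta}\|$, factors the characteristic function via the Mellin transform $\mathbb{E}[L_\beta^{s}]=\Gamma(1+s)/\Gamma(1+\beta s)$, extracts exponential decay $e^{-\pi(1-\beta)|t|/4}$ from Stirling on the imaginary axis, and invokes Paley--Wiener. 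This buys you several things the paper's proof does not make explicit: an explicit width for the strip of holomorphy, analyticity of the \emph{density} rather than merely the cdf, a transparent explanation of why $\beta<1$ is needed, and the observation that Assumption~\ref{ass} is essentially irrelevant---the smoothing comes entirely from $L_\beta$, so the result holds for $\sqrt{L_\beta}\,Z$ with any a.s.\ positive independent~$Z$. The paper's argument is more self-contained, requiring nothing beyond boundedness of $M_\beta$ and dominated convergence.

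One wording issue: you write that ``analytic continuation yields $\mathbb{E}[L_\beta^{s}]=\Gamma(1+s)/\Gamma(1+\beta s)$'' from the integer moments, but agreement on $\mathbb{N}$ alone does not determine an analytic function. What you need is either a direct computation of the Mellin transform of $M_\beta$ (this is in the literature, e.g.\ in the references the paper cites for the $M$-Wright function) or an appeal to Carlson's theorem, for which you should check the growth of both sides in a half-plane. You flag this step as needing care, and it is indeed routine once done properly, but ``analytic continuation'' is not the right name for it.
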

\begin{proof}
   Recall that $F_H$ denotes the cdf of $\| B_H \|$.
  {}From~\eqref{eq:repr} we find
  \begin{align}
     \mathbb{P}\big[\| B_{\alpha,\beta} \|\leq \varepsilon\big]&=
   \int_0^\infty F_H(\varepsilon x^{-1/2})M_{\beta}(x)dx \notag \\
   &=
      2\varepsilon^2\int_0^\infty F_H(y)M_{\beta}(\varepsilon^2/y^2)
      y^{-3}dy. \label{eq:for hol}
  \end{align}
  As $M_{\beta}$  extends to an entire function (see above),  the     
  last integrand clearly is  an entire function of~$\varepsilon$  for any fixed $y>0$.
  The function $M_{\beta}$ is bounded on~$\mathbb{R}^+$, as follows,
  e.g., from~\eqref{eq:def M} and~\eqref{eq:M as}. Thus, the integrand in~\eqref{eq:for hol}
   can be bounded by an integrable function of~$y$, independently
   of~$\varepsilon$. Thus, the conditions of a standard criterion for
   complex differentiation under the integral 
   sign~\cite[Theorem IV.5.8]{El05} are satisfied, which yields the assertion.
\end{proof}
Note that fBm, i.e.\ $\beta=1$, is not covered by Proposition~\ref{prop:an}.
In~\cite{LaNu03}, it is shown by Malliavin calculus that $\sup_{0\leq t\leq 1}B_H$
(without the absolute value) has a $C^\infty$ density.

We now show that, for $\beta<1$, the small ball ball probability 
of ggBm is of order~$\varepsilon^2$ as $\varepsilon\downarrow0$.
For $2/\theta+\beta<1$ ($\alpha+\beta<1$ for the uniform norm), we express it as a power series,
which yields a full asymptotic expansion.
We write
\[
    \eta_k(H):=\mathbb{E}\big[\| B_H \|^{-k}\big], \quad k\in\mathbb N,
\]
for the negative moments of the norm of fBm, omitting the dependence
on the norm $\|\cdot\|$ in the notation $\eta_k(H)$. By integration by parts,
it is easy to see that $\eta_k(H)$ is finite under Assumption~\ref{ass}.
\begin{theorem}\label{thm:main}
  Let $0<\alpha<2$, $0<\beta<1$, and define $H=\alpha/2$. Under Assumption~\ref{ass}, the small ball probability of ggBm satisfies
    \begin{equation}\label{eq:as e2}
     \mathbb{P}\big[\| B_{\alpha,\beta} \| \leq \varepsilon\big]
        \sim \frac{\eta_2(H)\varepsilon^2}{\Gamma(1-\beta)}, \quad \varepsilon\downarrow0.
   \end{equation}
   If, additionally, $2/\theta+\beta<1$, then it has the convergent series representation
    \begin{equation}\label{eq:expans}
    \mathbb{P}\big[\| B_{\alpha,\beta} \|\leq \varepsilon\big]
   =
    2\sum_{n=0}^\infty \frac{(-1)^n \eta_{2n+2}(H)}{(2n+2)n! \Gamma(1-\beta-\beta n)} \varepsilon^{2n+2},
    \quad \varepsilon\geq 0.
  \end{equation}
  In particular, if $\|\cdot \|=\|\cdot \|_{\infty}$, then~\eqref{eq:expans}
  holds for  $\alpha+\beta<1$.
\end{theorem}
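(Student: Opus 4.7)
The plan is to start from the integral formula
\[
  \mathbb{P}\big[\|B_{\alpha,\beta}\|\leq\varepsilon\big]
  = 2\varepsilon^2\int_0^\infty F_H(y)\,M_\beta(\varepsilon^2/y^2)\, y^{-3}\,dy
\]
already established in the proof of Proposition~\ref{prop:an}, and to analyse the inner factor $M_\beta(\varepsilon^2/y^2)$ in two ways: a pointwise limit for~\eqref{eq:as e2}, and a term-by-term Taylor expansion for~\eqref{eq:expans}.

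For~\eqref{eq:as e2}, I would divide by $\varepsilon^2$ and let $\varepsilon\downarrow 0$. Pointwise in $y>0$, $M_\beta(\varepsilon^2/y^2)\to M_\beta(0)=1/\Gamma(1-\beta)$. Since $M_\beta$ is bounded on $[0,\infty)$ and Assumption~\ref{ass} gives $F_H(y)\leq e^{-C_2 y^{-\theta}}$ near $0$, the integrand is dominated by $\|M_\beta\|_\infty F_H(y)y^{-3}$, which is integrable (super-polynomial decay at $0$, $y^{-3}$ decay at $\infty$). Dominated convergence then reduces the problem to computing $\int_0^\infty F_H(y)y^{-3}\,dy$. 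An integration by parts, with boundary terms vanishing by the same super-polynomial decay, shows this equals $\eta_2(H)/2$, yielding the constant $\eta_2(H)/\Gamma(1-\beta)$.

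For~\eqref{eq:expans}, I would substitute the entire series~\eqref{eq:def M} for $M_\beta(\varepsilon^2/y^2)$ and apply Fubini to obtain
\[
  \mathbb{P}\big[\|B_{\alpha,\beta}\|\leq\varepsilon\big]
  = 2\sum_{n=0}^\infty \frac{(-1)^n\varepsilon^{2n+2}}{n!\,\Gamma(1-\beta-\beta n)}
      \int_0^\infty F_H(y)\,y^{-2n-3}\,dy,
\]
and then identify the $y$-integral as $\eta_{2n+2}(H)/(2n+2)$ by the same integration-by-parts formula used above. The technical heart of the argument, and where I expect the main obstacle to be, is justifying the swap of sum and integral via absolute convergence. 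To this end I would bound the negative moments using Assumption~\ref{ass}: the substitution $u=y^{-\theta}$ on $(0,1]$ gives a bound of the form $\eta_k(H)\leq 1+C^{k}\,\Gamma(k/\theta+1)$. For the gamma factors in the denominator I would invoke the reflection identity~\eqref{eq:refl}, which yields $1/|\Gamma(1-\beta-\beta n)|\leq \Gamma(\beta(n+1))/\pi$. Combining these with $n!\geq n^{n+o(n)}e^{-n}$ and the crude Stirling bound~\eqref{eq:stir}, the absolute value of the $n$-th term in the series behaves like $\varepsilon^{2n}\,n^{(2/\theta+\beta-1)n+o(n)}$, so the exponent is negative precisely when $2/\theta+\beta<1$, and then the series converges super-exponentially for every $\varepsilon\geq 0$, licensing Fubini.

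Finally, for the uniform norm, \eqref{eq:fbm} gives $\theta=1/H=2/\alpha$, so $2/\theta+\beta<1$ is exactly $\alpha+\beta<1$, which provides the last assertion. The only genuinely delicate estimate is the Stirling bookkeeping in the third step; everything else is dominated/monotone convergence and integration by parts.
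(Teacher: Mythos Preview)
Your proposal is correct and uses the same ingredients as the paper: the integral representation from Proposition~\ref{prop:an}, dominated convergence with $M_\beta$ bounded for~\eqref{eq:as e2}, the integration-by-parts identity $\int_0^\infty F_H(y)y^{-2n-3}dy=\eta_{2n+2}(H)/(2n+2)$, the reflection formula~\eqref{eq:refl}, and a Stirling-based moment bound $\eta_k(H)\le k^{k/\theta+o(k)}$ (which the paper isolates as Lemma~\ref{le:mom}). The only difference is in how the series~\eqref{eq:expans} is justified: you invoke Tonelli/Fubini directly once absolute convergence of $\sum_n \varepsilon^{2n+2}\eta_{2n+2}(H)/\big((2n+2)n!\,|\Gamma(1-\beta-\beta n)|\big)$ is shown, whereas the paper splits off a partial sum, bounds the remainder $G_N$ by a Mittag-Leffler function $E_{1-\hat\beta,1-\hat\beta}(y^{-2})$, checks integrability against $F_H(y)/y^3$ using Assumption~\ref{ass}, and then lets $N\uparrow\infty$. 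Your route is shorter and avoids the Mittag-Leffler machinery; the paper's route makes the role of the small-ball exponent $\theta$ in controlling the $y\downarrow0$ singularity more explicit. Both arrive at the same convergence criterion $2/\theta+\beta<1$ via identical Stirling bookkeeping.
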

\begin{proof}
    By integration by parts, we have
    \begin{equation}\label{eq:parts}
       \int_0^\infty\frac{F_H(y)}{y^{2n+3}}dy = \frac{1}{2n+2} \int_0^\infty 
       y^{-2n-2}F_H(dy)
  =\frac{\eta_{2n+2}(H)}{2n+2}.
   \end{equation} 
   The assertion~\eqref{eq:as e2} follows from~\eqref{eq:for hol}, \eqref{eq:def M} for $x=0$, \eqref{eq:parts}
   for $n=0$, and dominated convergence, because~$M_\beta$ is a bounded function.
   For the next statement, define
   \[
     G_N(\varepsilon,y):=\sum_{n=N+1}^\infty   \frac{(-1)^n\varepsilon^{2n-2N+1}}{y^{2n}n! \Gamma(1-\beta-\beta n)},\quad
     y>0,\ \varepsilon\in[0,1],
   \]
   so that~\eqref{eq:for hol} yields, for $N\in \mathbb N$,
   \begin{multline}\label{eq:for expans}
     \mathbb{P}\big[\| B_{\alpha,\beta} \| \leq \varepsilon\big]\\
      =
     2\varepsilon^2\int_0^\infty \frac{F_H(y)}{y^3}
          \sum_{n=0}^N  \frac{(-\varepsilon^2/y^2)^n}{n! \Gamma(1-\beta-\beta n)} dy
       +  2\varepsilon^{2N+1}  \int_0^\infty \frac{F_H(y)}{y^3} G_N(\varepsilon,y)dy.
   \end{multline}
   For the finite sum, we can use~\eqref{eq:parts} to rewrite the summands
   as in~\eqref{eq:expans}.  
   We now provide an integrable bound
   for the last integrand in~\eqref{eq:for expans} that does not depend on $\varepsilon\in[0,1]$.
   It is clear that
   \begin{equation}\label{eq:est G}
     |G_N(\varepsilon,y)| \leq \sum_{n=N+1}^\infty   \frac{1}{y^{2n}n! \Gamma(1-\beta-\beta n)},\quad
     y>0,\ \varepsilon\in[0,1].
   \end{equation}
   By~\eqref{eq:refl} and Stirling's formula,
   \begin{equation}\label{eq:from stir}
    \frac{1}{n! |\Gamma(1-\beta -\beta n)|} \leq n^{-(1-\beta)n+o(n)}
    \leq C n^{-(1-\hat{\beta})n}, \quad n\in\mathbb N,
   \end{equation}
   for any $\hat{\beta}>\beta$; we will fix~$\hat \beta$ later.
   {}From~\eqref{eq:est G}, \eqref{eq:from stir}, and Stirling's formula, we conclude
   \begin{align}
      |G_N(\varepsilon,y)| &\leq C \sum_{n=N+1}^\infty \frac{1}{y^{2n}\Gamma((1-\hat{\beta})n)} \notag \\
      &=y^{-2} E_{1-\hat{\beta},1-\hat{\beta}}(y^{-2})
         - \sum_{n=1}^N  \frac{1}{y^{2n}\Gamma((1-\hat{\beta})n)},\label{eq:G bd}
   \end{align}
   where
   \[
     E_{u,v}(z)=\sum_{n=0}^\infty \frac{z^n}{\Gamma(u n+v)} ,\quad u,v>0,\ z\in\mathbb C,
   \]
   denotes the two-parameter Mittag-Leffler function. 
   We now use the uniform bound~\eqref{eq:G bd} in~\eqref{eq:for expans}.
   Integrability at $\infty$ is obvious, and we now show integrability at zero.
   By~\cite[Theorem~4.3]{GoKiMaRo14},
   \[
      E_{1-\hat{\beta},1-\hat{\beta}}(y^{-2})=\exp\Big(y^{-\frac{2}{1-\hat \beta}}\big(1+o(1)\big)\Big),
      \quad y\downarrow0.
   \]
   We see, using Assumption~\ref{ass} for $F_H$, that the last integrand in~\eqref{eq:for expans} satisfies
   \[
       \frac{F_H(y)}{y^3} G_N(\varepsilon,y) \leq 
       \exp\Big({-C_1} y^{-\theta} + y^{-\frac{2}{1-\hat \beta}}
       +o\big(y^{-\big(\theta \wedge \frac{2}{1-\hat \beta}\big)}\big)\Big),
       \quad y\downarrow0,
   \]
   uniformly w.r.t.\ $\varepsilon\in[0,1]$. This is integrable if
   $\theta>2/(1-\hat \beta)$, i.e., $2/\theta+\hat \beta <1$. Clearly,
   our assumption that $2/\theta+ \beta <1$ allows us to chose
   such a $\hat \beta > \beta$.
   By the following lemma, $\eta_{2n+2}(H)=2^{2n/\theta+o(n)}$. Using~\eqref{eq:from stir},
   we can thus take the limit $N\uparrow \infty$ in~\eqref{eq:for expans}
   for fixed $\varepsilon\in[0,1]$,
   which proves~\eqref{eq:expans} for these~$\varepsilon$. The extension to any~$\varepsilon\geq0$
   follows by analytic continuation, using Proposition~\ref{prop:an}.   
\end{proof}

In the preceding proof, we applied the following estimate for negative moments of the supremum
of fBm. Note that moments with \emph{positive} exponent are estimated in~\cite{Pr14}; see also~\cite{NoVa99}.
\begin{lemma}\label{le:mom}
  Under Assumption~\ref{ass},
  for $k\uparrow \infty$, we have $\eta_k(H)=k^{k/\theta+o(k)}$.
\end{lemma}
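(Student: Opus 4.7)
The plan is to bound $\eta_k(H) = \mathbb{E}[\|B_H\|^{-k}]$ both from above and below by an integral that reduces, after one change of variable, to an incomplete Gamma integral, so that the crude Stirling estimate~\eqref{eq:stir} delivers the asymptotics $k^{k/\theta + o(k)}$. The starting point is the integration-by-parts identity
\[
  \eta_k(H) = k\int_0^\infty y^{-k-1} F_H(y)\,dy,
\]
valid because the boundary terms vanish: at $0$ by the super-polynomial decay of $F_H$ implied by Assumption~\ref{ass}, and at $\infty$ trivially. Splitting the integral at $y=1$, the tail $\int_1^\infty y^{-k-1}dy = 1/k$ is negligible for the asymptotics, so everything comes from $y\in(0,1]$, where Assumption~\ref{ass} controls $F_H$ on both sides.

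For the upper bound, I would substitute $v = y^{-\theta}$ in $\int_0^1 y^{-k-1}e^{-C_2 y^{-\theta}}dy$, which transforms it into $\theta^{-1}\int_1^\infty v^{k/\theta-1}e^{-C_2 v}\,dv$. Extending this to $\int_0^\infty$ and recognizing a Gamma integral gives an upper bound of the shape $C_2^{-k/\theta}\Gamma(k/\theta)$, and~\eqref{eq:stir} yields $\Gamma(k/\theta)=k^{k/\theta+o(k)}$. Multiplying by the outer factor $k$ only perturbs the $o(k)$ in the exponent, which gives $\eta_k(H)\leq k^{k/\theta+o(k)}$.

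For the lower bound, the same substitution in $\int_0^1 y^{-k-1}e^{-C_1 y^{-\theta}}dy$ produces $\theta^{-1}\int_1^\infty v^{k/\theta-1}e^{-C_1 v}\,dv$, and I would argue that the missing piece $\int_0^1 v^{k/\theta-1}e^{-C_1 v}dv \leq \theta/k$ is negligible compared with $\Gamma(k/\theta)C_1^{-k/\theta}$ for large $k$. Thus the full Gamma integral is attained up to a factor of $1+o(1)$, which upon applying Stirling again gives $\eta_k(H)\geq k^{k/\theta+o(k)}$, matching the upper bound.

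The main obstacle is the usual one for this kind of bilateral estimate: one must make sure that the constants $C_1,C_2$ inside the exponentials do not produce an extra factor that spoils the exponent $k/\theta$. They do not, because $C_i^{-k/\theta}$ contributes only $\exp(O(k))$, which is absorbed into $k^{o(k)}$. Everything else is straightforward book-keeping. Note that the argument only uses Assumption~\ref{ass} and the fact that $F_H$ is the distribution function of a nonnegative random variable, so it applies uniformly to any of the norms for which Assumption~\ref{ass} is known.
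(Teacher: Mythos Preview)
Your argument is correct and follows essentially the same route as the paper: integration by parts to write $\eta_k(H)=k\int_0^\infty y^{-k-1}F_H(y)\,dy$, the exponential bounds on $F_H$ from Assumption~\ref{ass}, a substitution reducing the integral to a Gamma function, and Stirling. The only cosmetic difference is that the paper extends the bound $F_H(y)\leq \tilde K\exp(-C_2 y^{-\theta})$ to all $y>0$ before integrating, whereas you split at $y=1$ and bound the tail by $1/k$; both handle the same issue equivalently.
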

\begin{proof}
  We show only the upper estimate, as the lower one can be proven analogously.
  By~\eqref{eq:fbm}, there is $\varepsilon_0>0$ such that
  \[
     F_H(y)\leq 2 \exp({-C_2}y^{-\theta}),\quad
     0<y\leq \varepsilon_0.
  \]
  Define $\tilde K := 2 \vee \exp(C_2 \varepsilon_0^{-\theta})$.
  Then,
  \[
    F_H(y) \leq \tilde K \exp({-C_2}y^{-\theta}), \quad y>0;
  \]
  note that the right hand side is $\geq1$ for $y\geq \varepsilon_0$.
  This implies
  \begin{align*}
    \eta_k(H) &= \int_0^\infty y^{-k}F_H(dy) 
    = k \int_0^\infty y^{-k-1}F_H(y)dy \\
    &\leq k \tilde{K} \int_0^\infty \exp({-C_2}y^{-\theta})  y^{-k-1}dy\\
    &= e^{O(k)} \int_0^\infty e^{-w}
    w^{k/\theta -1} dw\\
    &=  e^{O(k)} \Gamma(k/\theta-1)=k^{k/\theta+o(k)},
  \end{align*}
  by Stirling's formula~\eqref{eq:stir} for the gamma function.
\end{proof}

If $2/\theta+ \beta <1$, then the series in~\eqref{eq:expans}
diverges for any $\varepsilon>0$. Indeed,
there is an increasing sequence $(n_j)$
   in~$\mathbb N$ such that the lower bound
   \[
     \mathrm{dist}(1-\beta -\beta n_j,\mathbb Z)
     \geq C >0, \quad j\in\mathbb N,
   \]
   holds.
   For rational~$\beta\in(0,1)$, this is clear by periodicity.
   For irrational~$\beta$, it follows from the classical fact that the sequence
   of fractional parts $\{n\beta\}$ is dense in $[0,1]$
   (Kronecker's approximation theorem).
   Hence, again by~\eqref{eq:refl} and Stirling's formula,
   \[
    \frac{1}{ |\Gamma(1-\beta -\beta n_j)|} \geq n_j^{\beta n_j + o(n_j)},
   \]
   which, together with Lemma~\ref{le:mom}, shows divergence. We leave it as an open problem
   if~\eqref{eq:expans} still holds in the sense of an asymptotic
   expansion of the small ball probability, if  $2/\theta+ \beta \leq1$.

\section{Large deviations}\label{se:large}

For fractional Brownian motion, it is well known that
\begin{equation}\label{eq:fbm ld sup}
  \mathbb{P}\Big[ \sup_{0\leq t\leq 1}|B_H(t)| \geq y\Big]
  =\exp\big({-\tfrac12}y^2+o(y^2)\big),\quad y\uparrow \infty.
\end{equation}
Indeed, the upper estimate follows from
\[
  \mathbb{P}\Big[ \sup_{0\leq t\leq 1}|B_H(t)| \geq y\Big]\leq 
  2\, \mathbb{P}\Big[ \sup_{0\leq t\leq 1}B_H(t) \geq y\Big]
  \]
and the Borell-TIS inequality~\cite[Theorem~4.2]{No12}, and the lower one is clear
from $ \sup_{0\leq t\leq 1}|B_H(t)| \geq B_H(1)$.
The following result gives a large deviation estimate for  ggBm. For $\beta=1$, the distribution has a Gaussian upper tail,  of course.
For $0<\beta<1$, the decay is between exponential and Gaussian, which
is sometimes called compressed exponential.
\begin{theorem}\label{thm:ld}
   Let $0<\alpha<2$ and $0<\beta\leq 1$, and assume that $\|\cdot\|$
   is a norm on the H\"older space $C_0^\gamma[0,1]$, where $0<\gamma<H=\alpha/2$,
   such that
   \begin{equation}\label{eq fbm ld}
  \mathbb{P}\big[ \| B_H\| \geq y\big]
  =\exp\big({-\kappa}y^2+o(y^2)\big),\quad y\uparrow \infty,
\end{equation}
  for some $\kappa>0$.
   Then there are constants $K_1,K_2>0$ such that
   \begin{align}
      \exp\Big({-K_1}y^{\frac{2}{2-\beta}}\big(1+o(1)\big)\Big) &\leq
      \mathbb{P}\big[ \|B_{\alpha,\beta}\| \geq y\big]  \label{eq:ld low} \\
  &\leq \exp\Big({-K_2}y^{\frac{2}{2-\beta}}\big(1+o(1)\big)\Big),\quad y\uparrow \infty.
  \label{eq:ld}
   \end{align}
\end{theorem}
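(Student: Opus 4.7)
The strategy is to exploit the representation~\eqref{eq:repr} by conditioning on $L_\beta$:
\[
  \mathbb{P}\big[\|B_{\alpha,\beta}\|\geq y\big] = \int_0^\infty \mathbb{P}\big[\|B_H\| \geq y/\sqrt{x}\,\big]\, M_\beta(x)\,dx.
\]
For $\beta=1$, the claim reduces to~\eqref{eq fbm ld}, so I would treat $\beta<1$. Using~\eqref{eq fbm ld} for the fBm tail and~\eqref{eq:M as} for $M_\beta$, the integrand behaves, for $x$ and $y/\sqrt{x}$ both large, like
\[
  \exp\!\Big({-}\kappa\tfrac{y^2}{x}(1+o(1)) - \tfrac{1-\beta}{\beta}(\beta x)^{1/(1-\beta)}(1+o(1))\Big).
\]
A Laplace computation locates the minimum of $f(x):=\kappa y^2/x+\tfrac{1-\beta}{\beta}(\beta x)^{1/(1-\beta)}$ at $x^*\asymp y^{2(1-\beta)/(2-\beta)}$, with $f(x^*)\asymp y^{2/(2-\beta)}$; one checks that $y/\sqrt{x^*}\asymp y^{1/(2-\beta)}\to\infty$, so both asymptotics apply at the critical point.

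For the lower bound~\eqref{eq:ld low}, I would bound the integral from below by its contribution from a small fixed interval around $x^*$, using the lower bound in~\eqref{eq fbm ld} and the lower bound on $M_\beta$ implicit in~\eqref{eq:M as}; on such an interval the errors in the two asymptotics can be absorbed into the $1+o(1)$, giving $\exp({-K_1}y^{2/(2-\beta)}(1+o(1)))$.

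For the upper bound~\eqref{eq:ld}, I would split the integral at two thresholds $X_1(y)<x^*<X_2(y)$ chosen so that $X_1\to\infty$ and $y/\sqrt{X_2}\to\infty$ (this is compatible with $X_1,X_2$ being of the same polynomial order as $x^*$). On $[X_1,X_2]$, both asymptotics apply uniformly, and the integrand is at most $\exp({-}f(x)(1+o(1)))\le \exp({-}f(x^*)(1+o(1)))$, while the interval has polynomial length, giving the desired bound up to subexponential factors. For $x\geq X_2$, use the trivial bound $\mathbb{P}[\|B_H\|\geq y/\sqrt{x}]\leq 1$ and integrate $M_\beta$ using~\eqref{eq:M as}; the contribution is dominated by $\exp({-}c X_2^{1/(1-\beta)})$, which is $\leq \exp({-}K_2 y^{2/(2-\beta)}(1+o(1)))$ by choice of $X_2$. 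For $x\leq X_1$, use $M_\beta\leq \sup M_\beta$ together with the fBm tail in~\eqref{eq fbm ld} to bound the integrand by $\exp({-}\kappa y^2/X_1(1+o(1)))$, which likewise satisfies the required estimate when $X_1$ is chosen near $x^*$.

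The main obstacle is the uniformity of the $o(\cdot)$ terms in~\eqref{eq fbm ld} and~\eqref{eq:M as} across the region where both are invoked simultaneously. These are only pointwise asymptotics, so the splitting thresholds $X_1,X_2$ must be selected carefully: far enough from $0$ and $\infty$ in absolute terms that the error terms are negligible, yet close enough to $x^*$ that the boundary contributions match the interior exponent $y^{2/(2-\beta)}$. Once this matching of scales is arranged, the rest of the argument is essentially routine Laplace-type bookkeeping.
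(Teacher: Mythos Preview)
Your proposal is correct and follows essentially the same route as the paper: condition on $L_\beta$ via~\eqref{eq:repr}, split the resulting integral into three ranges, dispose of the two tails by the trivial bounds $\bar F_H\le 1$ (large $x$) and $M_\beta\le C$ (small $x$), and apply a Laplace argument on the middle range, locating the saddle at $x^*\asymp y^{2(1-\beta)/(2-\beta)}$ with value $\asymp y^{2/(2-\beta)}$.

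The only cosmetic difference is in the choice of cut points. The paper takes the concrete thresholds $X_1=1$ and $X_2=y^2/\log y$ and, instead of carrying the $o(\cdot)$ from~\eqref{eq fbm ld} and~\eqref{eq:M as} through the Laplace step, first converts them into crude inequalities with weakened constants (e.g.\ $\bar F_H(z)\le Ce^{-z^2/3}$ for $z\ge 1$, and similarly for $M_\beta$), which are then valid on the entire middle range without any uniformity discussion; the tail pieces then decay strictly faster than $y^{2/(2-\beta)}$. Your choice $X_1,X_2\asymp x^*$ also works, but makes the two tail contributions of the \emph{same} exponential order $y^{2/(2-\beta)}$ as the main term; this is harmless here because the theorem only asserts existence of some $K_1,K_2>0$, so the minimum of the three constants does the job. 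Either bookkeeping yields the result.
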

\begin{proof}
  We may assume $\beta<1$, because for $\beta=1$ we have $B_{\alpha,1}=B_H$
  and the assumption~\eqref{eq fbm ld}
  makes the statement trivial.
  With $\bar{F}_H=1-F_H$ the tail distribution function of $\|B_H\|$,
  we have, from~\eqref{eq:repr},
  \[
     \mathbb{P}\big[ \|B_{\alpha,\beta}\| \geq y\big]
     =\int_0^\infty \bar{F}_H(yx^{-1/2})M_\beta(x)dx.
  \]
  If $\kappa=\tfrac12$, then $\bar{F}_H$ satisfies
  \begin{equation}\label{eq:F b}
    \bar{F}_H(y)=\exp\big({-\tfrac12}y^2+o(y^2)\big),\quad y\uparrow \infty,
  \end{equation}
  by~\eqref{eq fbm ld}. We assume  $\kappa=\tfrac12$ for rest of the proof,
  as $\kappa>0$ is a trivial extension. Let $0<\hat \kappa <\tfrac12$ be arbitrary.
  Since $M_\beta$ is bounded, we obtain
  \begin{align*}
     \int_0^1 \bar{F}_H(yx^{-1/2})M_\beta(x)dx
     &\leq C \int_0^1 e^{-\hat{\kappa}y^2/x}M_\beta(x)dx\\
     &\leq C \int_0^1 e^{-\hat{\kappa}y^2/x}dx\\
     &=C\big(e^{-\hat{\kappa}y^2}-\hat{\kappa} y^2 \Gamma(0,\hat{\kappa} y^2)\big),
  \end{align*}
  where $\Gamma(a,z)=\int_z^\infty t^{a-1}e^{-t}dt$
  is the incomplete gamma function. Using a well-known expansion
  of that function \cite[\S8.11]{DLMF}, we conclude
  \begin{equation}\label{eq:int01}
      \int_0^1 \bar{F}_H(yx^{-1/2})M_\beta(x)dx
      \leq \exp\big({-\hat{\kappa}}y^2+o(y^2)\big),\quad y\uparrow \infty.
  \end{equation}
  As $\beta<1$, this is negligible compared to the decay rate
  claimed in~\eqref{eq:ld low} and~\eqref{eq:ld}.
   Now define $h(y):=y^2/(\log y)$.
    Since $\bar{F}_H \leq 1$, and using~\eqref{eq:M as}, we have
  \begin{align*}
    \int_{h(y)}^\infty  \bar{F}_H(yx^{-1/2})M_\beta(x)dx
    &\leq   \int_{h(y)}^\infty  M_\beta(x)dx\\
    &\leq \int_{h(y)}^\infty   \exp\big({-C}x^{\frac{1}{1-\beta}}\big)dx\\
    &\leq   \exp\big({-C}h(y)^{\frac{1}{1-\beta}}\big).
  \end{align*}
  Since $2/(1-\beta)>2/(2-\beta)$, this is of faster decay
   than~\eqref{eq:ld}.
  
  It remains to show that the integral $\int_1^{h(y)}\bar{F}_H(yx^{-1/2})M_\beta(x)dx$ has the claimed
  growth order~\eqref{eq:ld}.
  By dividing the exponent in~\eqref{eq:M as} by~2, which makes
  the decay slower, we obtain
   \begin{equation*}
     M_\beta(x) \leq C\exp\Big( {-\frac{1-\beta}{2\beta}}(\beta x)^{\frac{1}{1-\beta}}
     \Big),\quad x\geq1.
  \end{equation*}
  Similarly, \eqref{eq:F b} implies
  \[
    \bar{F}_H(y) \leq C e^{-y^2/3}, \quad y\geq 1.
  \]
  Analogously, we can \emph{increase} the constants in the exponents
  to find \emph{lower} estimates, for which the following reasoning is analogous,
  and yields~\eqref{eq:ld low}.
  Therefore, we only discuss the upper estimate for $\int_1^{h(y)}$.
  This is a straightforward application of the Laplace
  method~\cite[Chapter~4]{deB58} to
  the integral
  \[
     \int_1^{h(y)}\exp\Big({-\frac{y^2}{3x}} -\frac{1-\beta}{2\beta}(\beta x)^{\frac{1}{1-\beta}}
     \Big)dx,
  \]
  which results from the two preceding estimates.
  The integrand is a strictly concave function with a maximum at
  \[
    x_0(y)=c y^{\frac{2(1-\beta)}{2-\beta}} \in (1,h(y))
  \]
  for some constant $c>0$. As we are not concerned with lower order
  terms, it suffices to evaluate the integrand at $x_0(y)$ to conclude
  \[
    \int_1^{h(y)} \bar{F}(yx^{-1/2})M_\beta(x)dx
    \leq \exp\Big({-C}y^{\frac{2}{2-\beta}}(1+o(1))\Big).
  \]
  This completes the proof.
\end{proof}
We now comment on applying Theorem~\ref{thm:ld} to other norms than the sup norm, which
requires verifying~\eqref{eq fbm ld}. As mentioned above, for the sup norm, this
follows from the Borell-TIS inequality. For an arbitrary norm $\|\cdot\|$ on
H\"older space, we have
\[
  \mathbb{P}\big[ \| B_H\| \geq y\big] = \mathbb{P}\big[y^{-1}B_H\in
  \{\| f \| \geq 1\}\big].
\]
In principle, this is in the scope of the general LDP (large deviation principle)
for Gaussian measures \cite[Theorem 3.4.12]{DeSt89}, but it may not be trivial
to verify the assumptions. For $H=\tfrac12$ and the H\"older norm,
this was done in~\cite{BaBeKe92}, extending Schilder's theorem.
Note that choosing a stronger topology than the uniform one enlarges
the dual space of path space, making it harder to verify the defining property
of a Gaussian measure. For the H\"older topology, we are on safe grounds, though,
by another approach:
Using the double sum method for Gaussian fields, Fatalov
has shown that~\eqref{eq:fbm ld sup} holds for the $\gamma$-H\"older
norm~\cite[Theorem~1.3]{Fa03}, and so Theorem~\ref{thm:ld}
is applicable to this norm (with $0<\gamma<H$, of course).

\bibliographystyle{siam}
\bibliography{literature}

\end{document}